\newtheorem{theorem}{Theorem}
\theoremstyle{plain}
\newtheorem{conjecture}[theorem]{Conjecture}
\newtheorem{proposition}[theorem]{Proposition}
\numberwithin{equation}{section}
\numberwithin{theorem}{section}
\numberwithin{exercise}{section}
\numberwithin{solution}{section}
\begin{document}
\title[Bounded topologies on Banach spaces]{Bounded topologies on Banach
spaces and some of their uses in economic theory: a review}
\author{Andrew J. Wrobel}
\address[A. J. Wrobel, formerly of the London School of Economics]{postal
address: 15082 East County Road 600N, Charleston, Illinois, 61920-8026,
United States.}
\email{a.wrobel@alumni.lse.ac.uk}
\urladdr{https://www.researchgate.net/profile/Andrew\_Wrobel3/research}
\thanks{I am grateful to Keita Owari for the reference \cite%
{DelbaenOrihuela20}.}
\date{This version, August 30, 2020}
\subjclass{Primary 46B99, 46E30; Secondary 46A70, 91B50}
\keywords{convex bounded topology, dual Banach space, weak* topology, Mackey
topology, convergence in measure, compact weak topology, economic equilibrium%
}

\begin{abstract}
Known results are reviewed about the bounded and the convex bounded
variants, $\mathrm{b}\mathcal{T}$ and $\mathrm{cb}\mathcal{T}$, of a
topology $\mathcal{T}$ on a real Banach space. The focus is on the cases of $%
\mathcal{T}\allowbreak =\mathrm{w}\left( P^{\ast },P\right) $ and of $%
\mathcal{T}\allowbreak =\mathrm{m}\left( P^{\ast },P\right) $, which are the
weak* and the Mackey topologies on a dual Banach space $P^{\ast }$. The
convex bounded Mackey topology, $\mathrm{cbm}\left( P^{\ast },P\right) $, is
known to be identical to $\mathrm{m}\left( P^{\ast },P\right) $. As for $%
\mathrm{bm}\left( P^{\ast },P\right) $, it is conjectured to be strictly
stronger than $\mathrm{m}\left( P^{\ast },P\right) $ or, equivalently, 
\textsl{not\/} to be a vector topology (except when $P$ is reflexive). Some
uses of the bounded Mackey and the bounded weak* topologies in economic
theory and its applications are pointed to. Also reviewed are the bounded
weak and the compact weak topologies, $\mathrm{bw}\left( Y,Y^{\ast }\right) $
and $\mathrm{kw}\left( Y,Y^{\ast }\right) $, on a general Banach space $Y$,
as well as their convex variants ($\mathrm{cbw}$ and $\mathrm{ckw}$).
\end{abstract}

\maketitle

\section{Introduction}

Nonmetric topologies on the norm-dual, $P^{\ast }$, of a real Banach space ($%
P$) can become much more manageable when restricted to bounded sets. For
example, given a convex subset of $P^{\ast }$, or a real-valued concave
function on $P^{\ast }$, the bounded weak* topology, $\mathrm{bw}^{\ast
}\allowbreak :=\mathrm{bw}\left( P^{\ast },P\right) $, can serve to show
that the set in question is weakly* closed, or that the function is weakly*
upper semicontinuous. In economic theory, such uses of the Krein-Smulian
Theorem are made in \cite[Proposition 1.1, Theorems 4.4 and 4.7]%
{DelbaenOwari19}, \cite[Proposition 1 and Example 5]{H-WLocCont}, %
\cite[Lemma 4.1]{H-WDemContEqEx} and \cite[Section 6.2]{H-W-SRA}. In
applications of economic equilibrium models, this can be an indispensable
tool for verifying that the production sets that describe the technologies
are weakly* closed, and that the profit and cost functions are weakly*
semicontinuous (which is needed for equilibria to exist, and for the dual
pairs of programmes to have no duality gaps): see \cite[Lemma 17.1]%
{H-WPumSto}, \cite[Lemma 6.1]{H-WStoHyd} and \cite[Lemmas 6.2.3--6.2.5]%
{H-W-SRA}.

When $P$ is $L^{1}\left( T,\sigma \right) $, the space of integrable
real-valued functions on a set $T$ that carries a sigma-finite measure $%
\sigma $---and so $P^{\ast }$ is the space of essentially bounded functions $%
L^{\infty }\left( T\right) $---another useful ``bounded'' topology on $%
L^{\infty }$ is the bounded Mackey topology, $\mathrm{bm}\left( L^{\infty
},L^{1}\right) $. This is because a concave real-valued function, $F$, is
continuous for the ``plain'' Mackey topology, $\mathrm{m}\left( L^{\infty
},L^{1}\right) $, if (and only if) it is $\mathrm{bm}\left( L^{\infty
},L^{1}\right) $-continuous, i.e., $\mathrm{m}\left( L^{\infty
},L^{1}\right) $-continuous on bounded sets---or, equivalently, if (and only
if) $F$ is continuous along bounded sequences (in $L^{\infty }$) that
converge in measure (on subsets of $T$ of finite measure).\footnote{%
Continuity along such sequences is also known as the Lebesgue property (of $%
F $): see, e.g., \cite[Definition 1.2]{Jouini-Schach-Touzi06}. It is
equivalent to $\mathrm{bm}\left( L^{\infty },L^{1}\right) $-continuity
because the topology of convergence in measure (on sets of finite measure), $%
\mathcal{T}_{\sigma }$, is both metrizable (on $L^{\infty }$) and equal to $%
\mathrm{m}\left( L^{\infty },L^{1}\right) $ on bounded subsets of $L^{\infty
}$: see, e.g., \cite[Example 8.47 (3)]{Aliprantis-BorderIDA06} and \cite[pp.
222--223]{GrothendieckTVS}. The metric in \cite[Example 8.47 (3)]%
{Aliprantis-BorderIDA06} is for the case of $\sigma \left( T\right)
\allowbreak <+\infty $, but $\mathcal{T}_{\sigma }$ is metrizable also when $%
\sigma $ is sigma-finite. Also, on $L^{\infty }$ globally, $\mathcal{T}%
_{\sigma }$ is weaker than $\mathrm{m}\left( L^{\infty },L^{1}\right) $.}
Thus the reduction to bounded sets provides direct access to the methods of
integral calculus, which can greatly simplify verification of Mackey
continuity \cite[Example 5]{H-WLocCont}. And, in economic equilibrium
analysis, Mackey continuity of a concave utility or production function ($F$%
) is essential for representing the price system by a density, as is done in %
\cite{Bewley72} and \cite{H-WPriceDens}. In addition, the use of convergence
in measure furnishes economic interpretations of Mackey continuity %
\cite[Sections 4 and 5]{H-WLocCont}. (Thus it also makes clear the
restrictiveness of this condition and of the resulting density form of the
price system, which excludes the singularities that alone can represent
capital charges when these are extremely concentrated in time or space. The
alternative is not to exclude the ``intractable'' singular functionals but
to re-represent them \cite{Wrobel18-Singul}.)

In the case that the concave function $F$ is defined and finite on the 
\textsl{whole\/} space $L^{\infty }$, the equivalence of $\mathrm{m}\left(
L^{\infty },L^{1}\right) $-continuity to $\mathrm{bm}\left( L^{\infty
},L^{1}\right) $-continuity can be shown by using the Fenchel-Legendre
conjugacy. This is a result of Delbaen and Owari \cite[Proposition 1.2]%
{DelbaenOwari19}, who also extend it to the case of a dual Orlicz space
instead of $L^{\infty }$ \cite[Theorem 4.5]{DelbaenOwari19} and apply it in
the mathematics of finance \cite[Theorem 4.8]{DelbaenOwari19}. Their
argument shows first that if $F$ is $\mathrm{bm}\left( L^{\infty
},L^{1}\right) $-continuous then the superlevel sets of its concave
conjugate (a function on $L^{1}$) are uniformly integrable, and then applies
the Dunford-Pettis Compactness Criterion and the Moreau-Rockafellar Theorem
(on the conjugacy between continuity and sup-compactness); the first step is
made also in \cite[Theorem 5.2 (i) and (iv)]{Jouini-Schach-Touzi06}.

The case of a nondecreasing concave $F$ that is defined only on the
nonnegative cone $L_{+}^{\infty }$ (and does not have a finite concave
extension to $L^{\infty }$) requires a different method: it relies on Mackey
continuity of the lattice operations in $L^{\infty }$, as well as on the
monotonicity of $F$ \cite[Proposition 3 and Example 4]{H-WLocCont}.

For a finite-valued concave $F$ defined on the \textsl{whole\/} space, the
equivalence of Mackey continuity to bounded Mackey continuity extends to the
case of a general dual Banach space, $P^{\ast }$, as the domain of $F$: see
Delbaen and Orihuela \cite[Theorem 8]{DelbaenOrihuela20}. (Equivalence to
sequential Mackey continuity follows when $P$ is strongly weakly compactly
generated \cite[Corollary 11]{DelbaenOrihuela20}.) \textsl{A fortiori\/},
those linear functionals (on $P^{\ast }$) that are continuous for the
bounded Mackey topology, $\mathrm{bm}^{\ast }\allowbreak :=\mathrm{bm}\left(
P^{\ast },P\right) $, are actually continuous for $\mathrm{m}^{\ast
}\allowbreak :=\mathrm{m}\left( P^{\ast },P\right) $, i.e., belong to $P$.
It follows that the \textsl{convex\/} bounded Mackey topology, $\mathrm{cbm}%
\left( P^{\ast },P\right) $, is identical to the ``plain'' Mackey topology:%
\footnote{%
For $P\allowbreak =L^{1}$ only, that $\mathrm{cbm}\left( L^{\infty
},L^{1}\right) \allowbreak =\mathrm{m}\left( L^{\infty },L^{1}\right) $ has
been shown earlier by methods specific to this space, in \cite[III.1.6 and
III.1.9]{CooperSSAFA} and in \cite[Theorem 5]{Nowak89}.} $\mathrm{cbm}^{\ast
}\allowbreak =\mathrm{m}^{\ast }$ for every $P$.\footnote{%
It also follows that $\mathrm{bm}^{\ast }$-continuity upgrades to $\mathrm{m}%
^{\ast }$-continuity not only for linear functionals but also for \textsl{%
general\/} linear maps, i.e., every $\mathrm{bm}^{\ast }$-continuous linear
map of $P^{\ast }$, into \textsl{any\/} topological vector space, is $%
\mathrm{m}^{\ast }$-continuous (on $P^{\ast }$). This is because, for a 
\textsl{linear\/} map of a space with topologies of the forms $\mathrm{b}%
\mathcal{T}$ and $\mathrm{cb}\mathcal{T}$, its $\mathrm{b}\mathcal{T}$%
-continuity implies $\mathrm{cb}\mathcal{T}$-continuity \cite[I.1.7]%
{CooperSSAFA}, and because $\mathrm{cbm}^{\ast }\allowbreak =\mathrm{m}%
^{\ast }$.} Implicit in \cite[Proposition 1]{H-WLocCont}, this result is
derived more simply from Grothen\-dieck's Completeness Theorem; it is quoted
here from \cite{Wrobel20MckContDualSp} as Proposition~\ref{m*=cbm*}. It does 
\textsl{not\/} follow that $\mathrm{bm}^{\ast }$ equals $\mathrm{m}^{\ast }$
because $\mathrm{bm}^{\ast }$ is not known to be a vector topology and,
indeed, it is conjectured not to be one (unless $P$ is reflexive): see \cite%
{Wrobel20MckContDualSp} or Conjecture~\ref{BddMack*UneqMack*} here.

As for the convex bounded variant of the weak* topology, $\mathrm{cbw}^{\ast
}$ is the same as $\mathrm{bw}^{\ast }$ (since the latter is locally convex
by the Banach-Dieudonn\'{e} Theorem), and so it is strictly stronger than
the ``plain'' $\mathrm{w}^{\ast }$ (unless $P$ is finite-dimensional).

Every Banach space, $Y$, whether dual or not, carries also the bounded weak
topology and its convex variant, $\mathrm{bw}$ and $\mathrm{cbw}$ (which
differ from each other unless $Y$ is reflexive, in which case $\mathrm{bw}%
^{\ast }\allowbreak =\mathrm{bw}\allowbreak =\mathrm{cbw}$). Studied in \cite%
{GomezGil84} and \cite{Wheeler72}, $\mathrm{bw}$ and $\mathrm{cbw}$ are
briefly discussed at the end of Section~\ref{BddConvBddTopols}. The space ($%
Y $) carries also the compact weak topology and its convex variant, $\mathrm{%
kw}$ and $\mathrm{ckw}$. Introduced in \cite{GonzalezGutierrez92}, this
concept produces a new topology (or two) if and only if $Y$ contains the
sequence space $l^{1}$ (if it does not, then $\mathrm{kw}\allowbreak =%
\mathrm{bw}$ and so $\mathrm{ckw}\allowbreak =\mathrm{cbw}$ too): see
Section~\ref{CompConvCompWeakTopols}. These topologies are used in studying
function spaces and linear operations: see \cite{FerreraThesisEFDC} and %
\cite[Chapter 4]{LlavonaACDF} for such uses of $\mathrm{bw}$ and $\mathrm{cbw%
}$, and \cite{GonzalezGutierrez92} and \cite{GonzalezGutierrez93} for those
of $\mathrm{kw}$ and $\mathrm{ckw}$.

\section{The bounded and convex bounded topologies}

\label{BddConvBddTopols}The weakest and the strongest of those locally
convex topologies on a dual Banach space $P^{\ast }$ which yield $P$ as the
continuous dual are denoted by $\mathrm{w}\left( P^{\ast },P\right) $ and $%
\mathrm{m}\left( P^{\ast },P\right) $, abbreviated to $\mathrm{w}^{\ast }$
and $\mathrm{m}^{\ast }$. Known as the \textsl{weak\/} and the \textsl{%
Mackey\/} topologies, on $P^{\ast }$ for its pairing with $P$, the two can
be called the weak* and the Mackey topologies (since the other Mackey
topology on $P^{\ast }$, $\mathrm{m}\left( P^{\ast },P^{\ast \ast }\right) $%
, is identical to the norm topology). The \textsl{bounded weak*\/} topology
on $P^{\ast }$ is denoted by $\mathrm{bw}\left( P^{\ast },P\right) $,
abbreviated to $\mathrm{bw}^{\ast }$. It can be defined by stipulating that
a subset of $P^{\ast }$ is $\mathrm{bw}^{\ast }$-closed if and only if its
intersection with every closed ball in $P^{\ast }$ is $\mathrm{w}^{\ast }$%
-closed (or, equivalently, $\mathrm{w}^{\ast }$-compact). In other words, $%
\mathrm{bw}^{\ast }$ is the strongest topology that is equal to the weak*
topology on every bounded subset (of $P^{\ast }$). Directly from its
definition, $\mathrm{bw}^{\ast }$ is stronger than $\mathrm{w}^{\ast }$ (and
is strictly so unless $P$ is finite-dimensional). The Banach-Dieudonn\'{e}
Theorem identifies $\mathrm{bw}^{\ast }$ as the topology of uniform
convergence on norm-compact subsets of $P$: see, e.g., \cite[p. 159: Theorem
2]{GrothendieckTVS}, \cite[18D: Corollary (b)]{HolmesGFA} or \cite[IV.6.3:
Corollary 2]{SchaeferTVS}. It follows that: (i)~$\mathrm{bw}^{\ast }$ is
locally convex, and (ii)~$\mathrm{bw}^{\ast }$ is weaker than $\mathrm{m}%
^{\ast }$.\footnote{%
To deduce that $\mathrm{bw}^{\ast }$ is weaker than $\mathrm{m}^{\ast }$,
recall that $\mathrm{m}\left( P^{\ast },P\right) $ is the topology of
uniform convergence on all weakly compact subsets of $P$. See, e.g., %
\cite[Section 5.18]{Aliprantis-BorderIDA06} or \cite[IV.3.2: Corollary 1]%
{SchaeferTVS}, where the compacts are required to be convex and circled as
well, but here ``convex'' can be omitted because $P$ is a Banach space and
one can apply Krein's Theorem---for which see, e.g., \cite[Theorem 6.35,
named Krein-Smulian]{Aliprantis-BorderIDA06}, \cite[V.6.4]%
{Dunford-SchwartzLOI}, \cite[19E]{HolmesGFA} or \cite[IV.11.4]{SchaeferTVS}.
That ``circled'' can be omitted is obvious \cite[I.5.2]{SchaeferTVS}.} So,
since every \textsl{convex\/} $\mathrm{m}^{\ast }$-closed set is $\mathrm{w}%
^{\ast }$-closed, it follows that every convex $\mathrm{bw}^{\ast }$-closed
set is $\mathrm{w}^{\ast }$-closed; this the Krein-Smulian Theorem, for
which see, e.g., \cite[V.5.7]{Dunford-SchwartzLOI}, \cite[18E: Corollary 2]%
{HolmesGFA} or \cite[IV.6.4]{SchaeferTVS}. Also, given that $\mathrm{bw}%
^{\ast }$ is locally convex by Part~(i), Part~(ii)---that $\mathrm{bw}^{\ast
}\allowbreak \subseteq \mathrm{m}^{\ast }$---can be restated as: the $%
\mathrm{bw}^{\ast }$-continuous dual of $P^{\ast }$ equals $P$. This
equality requires the norm (of $P$) to be complete. Indeed, it is a special
case of Grothen\-dieck's Completeness Theorem; for this case see, e.g., %
\cite[V.5.5 and V.5.6]{Dunford-SchwartzLOI}, \cite[18E: Corollary 1]%
{HolmesGFA} or \cite[IV.6.2: Corollary 2]{SchaeferTVS}.\footnote{%
To prove that the $\mathrm{bw}^{\ast }$-dual of $P^{\ast }$ is $P$ from the
standard formulation of Grothendieck's Theorem (Theorem~\ref{GrothendieckThm}
here), apply it to $P^{\ast }$ as $E$---with $\mathrm{w}^{\ast }$ as $%
\mathfrak{T}$ and the bounded subsets of $P^{\ast }$ as $\mathfrak{S}$, and
hence with $P$ as $E^{\prime }$ and the norm topology of $P$ as the $%
\mathfrak{S}$-topology---to conclude that a linear functional on $P^{\ast }$
is $\mathrm{w}^{\ast }$-continuous if it is so on bounded sets, i.e., if it
is $\mathrm{bw}^{\ast }$-continuous.}

By its definition, $\mathrm{bw}^{\ast }$ is a case of the general concept of
``bounding'' a locally convex topology $\mathcal{T}$, on a space $Y$ with a
norm $\left\| \cdot \right\| $, to produce the strongest topology that is
equal to $\mathcal{T}$ on every norm-bounded subset (of $Y$). It is assumed
that: (i)~$\mathcal{T}$ is weaker than the norm topology, and (ii)~the
closed unit ball of $Y$ is $\mathcal{T}$-closed; such a $\mathcal{T}$ is
said to be \textsl{compatible\/} with the norm (of $Y$), and $\left(
Y,\left\| \cdot \right\| ,\mathcal{T}\right) $ is then called a \textsl{Saks
space\/} \cite[p. 6]{CooperSSAFA}.\footnote{%
And so $\mathcal{T}$ is compatible with the norm of $Y$ if $\mathcal{T}$ is
weaker than the norm topology but stronger than $\mathrm{w}\left( Y,Y^{\ast
}\right) $, or at least stronger than $\mathrm{w}\left( Y,P\right) $ when $%
Y\allowbreak =P^{\ast }$.} The resulting \textsl{bounded }$\mathcal{T}$%
\textsl{-topology\/}, denoted here by $\mathrm{b}\mathcal{T}$, is stronger
than $\mathcal{T}$ (and weaker than the norm). Put in other words, a subset
of $Y$ is $\mathrm{b}\mathcal{T}$-closed if and only if its intersection
with every closed ball of $Y$ is $\mathcal{T}$-closed. By \cite[Theorem 5]%
{Collins55}---as is noted also in \cite[2.7]{FerreraThesisEFDC}, \cite[p.
410]{GildeLamadrid59} and \cite[p. 72]{GomezGil84}---$\mathrm{b}\mathcal{T}$
is always \textsl{semi\/}-linear (i.e., both vector addition and scalar
multiplication are \textsl{separately\/} continuous in either variable), but
generally it need \textsl{not\/} be linear (although $\mathrm{bw}^{\ast }$
is). A map of $Y$ (into a topological space) is $\mathrm{b}\mathcal{T}$%
-continuous if and only if its restrictions to bounded sets are $\mathcal{T}$%
-continuous \cite[Theorem 1 (b)]{GildeLamadrid59}.

When $\left( Y,\left\| \cdot \right\| ,\mathcal{T}\right) $ is a Saks space,
the norm-compatible topology $\mathcal{T}$ can be ``mixed'' with the norm to
produce the strongest \textsl{vector\/} topology that is equal to $\mathcal{T%
}$ on every norm-bounded subset (of $Y$). Remarkably, this is also the
strongest \textsl{locally convex\/} topology that is equal to $\mathcal{T}$
on every bounded set: this is shown in \cite[I.1.4 and I.1.5 (iii)]%
{CooperSSAFA} and \cite[2.2.2]{Wiweger61}, and is stated also in \cite[1.39
and 1.40]{FerreraThesisEFDC}---where the resulting topology is denoted by $%
\gamma \left( \left\| \cdot \right\| ,\mathcal{T}\right) $, or by $\gamma
\left( \mathcal{B},\mathcal{T}\right) $ with $\mathcal{B}$ for the bounded
sets. Here, this \textsl{convex bounded }$\mathcal{T}$\textsl{-topology\/}
is denoted by $\mathrm{cb}\mathcal{T}$; it is stronger than $\mathcal{T}$
and weaker than $\mathrm{b}\mathcal{T}$.\footnote{%
Obviously, $\mathrm{cb}\mathcal{T}\allowbreak =\mathrm{b}\mathcal{T}$ if and
only if $\mathrm{b}\mathcal{T}$ is locally convex (or, equivalently, is a
vector topology).} (The three are, however, \textsl{sequentially\/}
equivalent (i.e., have the same convergent sequences) when $Y\allowbreak
=P^{\ast }$ is a dual Banach space and $\mathcal{T}$ is stronger than $%
\mathrm{w}^{\ast }$ (e.g., when $\mathcal{T}$ is $\mathrm{m}^{\ast }$ or $%
\mathrm{w}^{\ast }$ itself). This is because, unlike a general uncountable
net, a $\mathcal{T}$-convergent sequence, being $\mathrm{w}^{\ast }$%
-convergent, is bounded by the Banach-Steinhaus Theorem---and so it is $%
\mathrm{b}\mathcal{T}$-convergent.) A \textsl{linear\/} map of $Y$ (into a
topological vector space) is $\mathrm{cb}\mathcal{T}$-continuous if (and
only if) it is $\mathrm{b}\mathcal{T}$-continuous, i.e., if (and only if)
its restrictions to bounded sets are $\mathcal{T}$-continuous \cite[I.1.7]%
{CooperSSAFA}. As a case of this, a linear functional on $Y$ is $\mathrm{cb}%
\mathcal{T}$-continuous if (and only if) it is $\mathrm{b}\mathcal{T}$%
-continuous (but, to avoid misapplying this, recall that $\mathrm{b}\mathcal{%
T}$ need not be a vector topology).

For $Y\allowbreak =P^{\ast }$ with $\mathcal{T}\allowbreak =\mathrm{w}^{\ast
}\allowbreak :=\mathrm{w}\left( P^{\ast },P\right) $, where $P$ is a (real)
Banach space, the bounded weak* topology is itself locally convex, and so it
is identical to its convex variant: $\mathrm{cbw}^{\ast }\allowbreak =%
\mathrm{bw}^{\ast }$. The case of the \textsl{convex bounded Mackey\/}
topology is different: $\mathrm{cbm}^{\ast }\allowbreak =\mathrm{m}^{\ast }$
(on the whole space $P^{\ast }$). As is set out next, this follows from
Grothen\-dieck's Completeness Theorem \cite[IV.6.2]{SchaeferTVS}, which is
quoted for easy reference.\footnote{%
Grothen\-dieck's Completeness Theorem is also stated in \cite[p. 73:
Corollary 1]{GrothendieckTVS}.}

\begin{theorem}[Grothendieck]
\label{GrothendieckThm}Let $\mathfrak{T}$ be a locally convex topology on a
real vector space $E$. When additionally $\mathfrak{S}$ is a saturated family%
\footnote{%
A family, $\mathfrak{S}$, of subsets of a locally convex space is called 
\textsl{saturated\/} \cite[p. 81]{SchaeferTVS} if: (i)~all subsets of every
member of $\mathfrak{S}$ belong to $\mathfrak{S}$, (ii)~all scalar multiples
of every member of $\mathfrak{S}$ belong to $\mathfrak{S}$, and (iii)~for
each finite $\mathfrak{F\allowbreak }\subset \mathfrak{S}$, the closed
convex circled hull of the union of $\mathfrak{F}$ belongs to $\mathfrak{S}$.%
} of $\mathfrak{T}$-bounded sets covering $E$, the $\mathfrak{T}$-dual of $E$
is complete under the $\mathfrak{S}$-topology (the topology of uniform
convergence on every $S\allowbreak \in \mathfrak{S}$) if and \textsl{only\/}
if every linear functional (on $E$) that is $\mathfrak{T}$-continuous on
each $S\allowbreak \in \mathfrak{S}$ is actually $\mathfrak{T}$-continuous
on the whole space $E$ (i.e., is in the $\mathfrak{T}$-dual of $E$).
\end{theorem}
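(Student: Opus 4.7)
The plan is to identify the abstract $\mathfrak{S}$-completion of $(E', \mathfrak{S})$ with the space $F$ of linear functionals $\phi : E \to \mathbb{R}$ such that $\phi|_S$ is $\mathfrak{T}$-continuous for each $S \in \mathfrak{S}$. The theorem then reduces to the tautology ``$E'$ equals its own completion iff $E' = F$'', which is the stated condition. First I would verify that $F$ carries a natural $\mathfrak{S}$-topology extending the one on $E'$: for each $S \in \mathfrak{S}$ the saturated property lets one assume $S$ closed, convex and circled, so $0 \in S$; continuity of $\phi|_S$ at $0$ furnishes a $\mathfrak{T}$-neighborhood $U$ of $0$ with $|\phi| \le 1$ on $S \cap U$, and the $\mathfrak{T}$-boundedness $S \subseteq \lambda U$ together with the balanced structure forces $|\phi| \le \lambda$ throughout $S$, so the seminorm $p_S(\phi) := \sup_{x \in S}|\phi(x)|$ is finite on $F$ and restricts to the given $\mathfrak{S}$-seminorm on $E'$.

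Next I would show that $F$ is $\mathfrak{S}$-complete by a standard Cauchy-net argument: since $\mathfrak{S}$ covers $E$ and contains singletons (by heredity), any $\mathfrak{S}$-Cauchy net $(f_\alpha) \subset F$ converges pointwise to a linear functional $\phi$, the convergence is uniform on each $S$, and the uniform limit of the $\mathfrak{T}$-continuous restrictions $f_\alpha|_S$ is $\mathfrak{T}$-continuous, so $\phi \in F$. Together with the density of $E'$ in $F$ (still to be established), this identifies $F$ as the $\mathfrak{S}$-completion of $E'$, from which both directions of the equivalence follow.

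The density of $E'$ in $F$ is the substantive content and my main obstacle. I would attempt it via Hahn-Banach separation inside the locally convex space $(F, \mathfrak{S})$: if some $\phi \in F$ lay outside the $\mathfrak{S}$-closure of $E'$, there would exist an $\mathfrak{S}$-continuous linear functional $\Lambda$ on $F$ with $\Lambda|_{E'} = 0$ and $\Lambda(\phi) \ne 0$. The crux is to identify any such $\Lambda$ with evaluation at some $x_0 \in E$, after which $f(x_0) = 0$ for every $f \in E'$ would force $x_0 = 0$ (because $\mathfrak{T}$ is Hausdorff locally convex, so $E'$ separates points of $E$), yielding the contradiction $\Lambda(\phi) = \phi(0) = 0$. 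The delicate point is the identification: continuity gives only $|\Lambda(\phi)| \le C\, p_S(\phi)$, which represents $\Lambda$ as a bounded functional on the restrictions $\{\phi|_S : \phi \in F\} \subseteq C_b(S)$ and a priori admits representations by finitely additive measures on $S$ rather than by point evaluations. Exploiting the full strength of saturation---closure of finite unions under closed convex circled hulls---together with a bipolar computation in the duality $\langle F, E\rangle$ should cut the candidate representations of $\Lambda$ down to genuine point evaluations at elements $x_0 \in E$, closing the argument.
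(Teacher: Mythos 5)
First, a point of reference: the paper does not prove this theorem at all --- it quotes it verbatim from Schaefer \cite[IV.6.2]{SchaeferTVS} ``for easy reference'' and then only \emph{applies} it. So your attempt can only be measured against the standard textbook proof, whose overall architecture you have in fact reproduced correctly: identify the $\mathfrak{S}$-completion of $E'$ with the space $F$ of linear forms that are $\mathfrak{T}$-continuous on each $S\in\mathfrak{S}$, after which the theorem is the tautology ``$E'$ is complete iff $E'=F$.'' Your first two steps are sound: the finiteness of $p_S$ on $F$ (via $S\subseteq\lambda U$ and circledness) and the $\mathfrak{S}$-completeness of $F$ (pointwise limit of an $\mathfrak{S}$-Cauchy net is linear, the convergence is uniform on each $S$, and a uniform limit of continuous restrictions is continuous) are both correct and standard.

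The genuine gap is exactly where you locate it: the density of $E'$ in $F$. Your proposed route --- Hahn--Banach separation in $(F,\mathfrak{S})$ followed by identifying the separating functional $\Lambda$ with a point evaluation at some $x_0\in E$ --- rests on a premise that is false in general: the continuous dual of $E'$ (hence of $F$) under the $\mathfrak{S}$-topology is typically \emph{strictly larger} than $E$. Take $E$ a nonreflexive Banach space with $\mathfrak{T}$ the norm topology and $\mathfrak{S}$ the family of all bounded sets: the $\mathfrak{S}$-topology on $E'$ is the dual norm topology, whose dual is $E''\neq E$. No amount of saturation or bipolar computation in $\langle F,E\rangle$ will cut those functionals down to point evaluations at points of $E$, so the contradiction you aim for ($x_0=0$, hence $\Lambda=0$) is not reachable this way. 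The standard argument avoids duals of $F$ altogether and approximates directly: fix $\phi\in F$, a closed convex circled $S\in\mathfrak{S}$, and $\varepsilon>0$; continuity of $\phi|_S$ at $0$ gives a convex circled neighbourhood $U$ of $0$ with $\left| \phi \right| \leq \varepsilon$ on $S\cap U$, i.e., $\phi\in\varepsilon\left( S\cap U\right) ^{\circ }$ (polar in the algebraic dual $E^{\ast }$). Now $\left( S\cap U\right) ^{\circ }$ is the $\sigma \left( E^{\ast },E\right) $-closed convex circled hull of $S^{\circ }\cup U^{\circ }$, and since $U^{\circ }$ is $\sigma \left( E^{\ast },E\right) $-compact by Alaoglu--Bourbaki while $S^{\circ }$ is closed, that hull is already closed and is contained in $S^{\circ }+U^{\circ }$. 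Writing $\phi =r+f$ with $f\in \varepsilon U^{\circ }\subseteq E'$ and $r\in \varepsilon S^{\circ }$ gives $\sup_{S}\left| \phi -f\right| \leq \varepsilon$, which is the required density. This polar-calculus step is the real content of Grothendieck's theorem, and it is the one piece your proposal does not supply.
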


\begin{proposition}[\protect\cite{Wrobel20MckContDualSp}]
\label{m*=cbm*}Let $P$ be a real Banach space, and $P^{\ast }$ its
norm-dual. Then $\mathrm{cbm}\left( P^{\ast },P\right) \allowbreak =\mathrm{m%
}\left( P^{\ast },P\right) $.
\end{proposition}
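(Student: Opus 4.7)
The plan is to prove equality by the Mackey--Arens route. The inclusion $\mathrm{cbm}^{\ast}\supseteq\mathrm{m}^{\ast}$ is immediate from the construction of $\mathrm{cb}\mathcal{T}$ as the strongest locally convex topology agreeing with $\mathcal{T}$ on bounded sets. It therefore suffices to verify that $\mathrm{cbm}^{\ast}$ is a locally convex topology on $P^{\ast}$ whose continuous dual is $P$; the reverse inclusion $\mathrm{cbm}^{\ast}\subseteq\mathrm{m}^{\ast}$ will then follow from the defining property of $\mathrm{m}^{\ast}$ as the strongest locally convex topology on $P^{\ast}$ yielding $P$ as its continuous dual.

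First I would unpack what $\mathrm{cbm}^{\ast}$-continuity means for a linear functional $\varphi$ on $P^{\ast}$. By \cite[I.1.7]{CooperSSAFA}, as recalled in the discussion above, $\varphi$ is $\mathrm{cbm}^{\ast}$-continuous if and only if it is $\mathrm{bm}^{\ast}$-continuous, i.e., if and only if its restriction to every norm-bounded subset of $P^{\ast}$ is $\mathrm{m}^{\ast}$-continuous. The task thus reduces to showing that any such $\varphi$ is actually $\mathrm{m}^{\ast}$-continuous on all of $P^{\ast}$, i.e., $\varphi\in P$.

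The key step is to invoke Grothendieck's Completeness Theorem (Theorem~\ref{GrothendieckThm}) with $E:=P^{\ast}$, $\mathfrak{T}:=\mathrm{m}^{\ast}$, and $\mathfrak{S}$ the family of all norm-bounded subsets of $P^{\ast}$. Three verifications are needed:
\begin{enumerate}
\item[(i)] $\mathfrak{S}$ is a saturated family of $\mathrm{m}^{\ast}$-bounded sets covering $P^{\ast}$---routine, since norm-boundedness passes to subsets, to scalar multiples, and to closed convex circled hulls of finite unions, while $\mathrm{m}^{\ast}$ is weaker than the norm topology (so norm-bounded sets are $\mathrm{m}^{\ast}$-bounded);
\item[(ii)] the $\mathfrak{S}$-topology on the $\mathfrak{T}$-dual $P$ is precisely the norm topology of $P$---since a norm-bounded set $S\subseteq P^{\ast}$ of norm-radius $M$ yields the seminorm $x\mapsto \sup_{x^{\ast}\in S}|\langle x^{\ast},x\rangle|\leq M\|x\|$, while the closed unit ball of $P^{\ast}$ itself recovers the norm of $P$;
\item[(iii)] $P$ is complete in that $\mathfrak{S}$-topology---because $P$ is assumed to be a Banach space.
\end{enumerate}
Grothendieck's theorem then yields the desired statement: every linear functional on $P^{\ast}$ that is $\mathrm{m}^{\ast}$-continuous on each norm-bounded subset of $P^{\ast}$ is already $\mathrm{m}^{\ast}$-continuous globally on $P^{\ast}$, hence lies in $P$. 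Therefore the continuous dual of $(P^{\ast},\mathrm{cbm}^{\ast})$ is exactly $P$, and local convexity of $\mathrm{cbm}^{\ast}$ gives $\mathrm{cbm}^{\ast}\subseteq\mathrm{m}^{\ast}$ by the maximality property of the Mackey topology.

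The main obstacle is conceptual rather than computational: one must carefully line up the ``bounded'' of the Saks-space construction of $\mathrm{bm}^{\ast}$ and $\mathrm{cbm}^{\ast}$ (norm-bounded subsets of $P^{\ast}$) with the ``$\mathfrak{T}$-bounded'' hypothesis of Grothendieck's theorem, and identify the induced $\mathfrak{S}$-topology on the dual side correctly as the norm topology of $P$. Once these identifications are in place, norm-completeness of $P$ supplies exactly the hypothesis required to promote $\mathrm{m}^{\ast}$-continuity from bounded sets to all of $P^{\ast}$, and everything else is bookkeeping.
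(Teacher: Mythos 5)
Your argument is correct and is essentially the paper's own proof: both apply Grothendieck's Completeness Theorem to $P^{\ast}$ with $\mathfrak{T}=\mathrm{m}^{\ast}$ and $\mathfrak{S}$ the norm-bounded sets to show that $\mathrm{bm}^{\ast}$-continuous (hence $\mathrm{cbm}^{\ast}$-continuous) linear functionals lie in $P$, and then conclude by the maximality of the Mackey topology among locally convex topologies with dual $P$. Your explicit verifications of saturation and of the identification of the $\mathfrak{S}$-topology with the norm of $P$ merely spell out what the paper leaves implicit.
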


\begin{proof}
Apply Theorem~\ref{GrothendieckThm} to $P^{\ast }$ as $E$---with $\mathrm{m}%
^{\ast }$ as $\mathfrak{T}$ and the bounded subsets of $P^{\ast }$ as $%
\mathfrak{S}$, and hence with $P$ as the $\mathfrak{T}$-dual and the norm
topology of $P$ as the $\mathfrak{S}$-topology---to conclude that a linear
functional on $P^{\ast }$ is $\mathrm{m}^{\ast }$-continuous if it is so on
bounded sets (i.e., if it is $\mathrm{bm}^{\ast }$-continuous). \textsl{A
fortiori\/}, it is $\mathrm{m}^{\ast }$-continuous (i.e., is in $P$) if it
is $\mathrm{cbm}^{\ast }$-continuous. In other words, $\mathrm{cbm}^{\ast }$
yields the same dual space as $\mathrm{m}^{\ast }$ (viz., $P$). This proves
that $\mathrm{cbm}^{\ast }\allowbreak =\mathrm{m}^{\ast }$ (since $\mathrm{%
cbm}^{\ast }$ is both locally convex and stronger than $\mathrm{m}^{\ast }$).%
\footnote{%
Alternatively, Cooper's special case of Grothen\-dieck's Theorem %
\cite[I.1.17 (ii)]{CooperSSAFA} can be applied---to $P^{\ast }$ as his $E$,
with $\mathrm{m}^{\ast }$ as $\tau $ and the bounded subsets of $P^{\ast }$
as $\mathcal{B}$, and hence with $P^{\ast \ast }$ as $E_{\mathcal{B}%
}^{\prime }$ and $\mathrm{cbm}^{\ast }$ as his $\gamma \allowbreak =\gamma
\left( \mathcal{B},\tau \right) $---to conclude that the $\mathrm{cbm}^{\ast
}$-dual equals the $\mathrm{m}^{\ast }$-dual (so $\mathrm{cbm}^{\ast
}\allowbreak =\mathrm{m}^{\ast }$).}
\end{proof}

As for $\mathrm{bm}^{\ast }$, it is \textsl{conjectured\/} to be different
from $\mathrm{m}^{\ast }$.

\begin{conjecture}[\protect\cite{Wrobel20MckContDualSp}]
\label{BddMack*UneqMack*}For $P=L^{1}\left[ 0,1\right] $ at least, and
possibly for every nonreflexive Banach space $P$, the topology $\mathrm{bm}%
\left( P^{\ast },P\right) $ is strictly stronger than $\mathrm{m}\left(
P^{\ast },P\right) $---or, equivalently, $\mathrm{bm}\left( P^{\ast
},P\right) $ is not linear.
\end{conjecture}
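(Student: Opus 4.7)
The plan is to prove the equivalent assertion that $\mathrm{bm}^{\ast}$ fails to be a vector topology. By Proposition~\ref{m*=cbm*} combined with the remark in the excerpt that $\mathrm{cb}\mathcal{T}=\mathrm{b}\mathcal{T}$ precisely when $\mathrm{b}\mathcal{T}$ is a vector topology, this is equivalent to the conjecture itself; and since $\mathrm{b}\mathcal{T}$ is always separately continuous, the task reduces to exhibiting a specific failure of joint continuity of addition at the origin for $\mathrm{bm}(L^\infty[0,1],L^1[0,1])$.

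Any such witness must be net-based. A $\mathrm{bm}^{\ast}$-convergent sequence is $\mathrm{w}^{\ast}$-convergent and therefore norm-bounded by Banach--Steinhaus, hence sits eventually inside a single ball, on which $\mathrm{bm}^{\ast}$ and $\mathrm{m}^{\ast}$ agree; sequentially the two topologies are indistinguishable.

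The concrete attempt is to tailor a $\mathrm{bm}^{\ast}$-open neighbourhood $U$ of $0$ whose intersection with each $rB_{L^{\infty}}$ is an $\mathrm{m}^{\ast}$-open neighbourhood of $0$ in $rB_{L^{\infty}}$, yet whose dependence on $r$ is tight enough to preclude $V+V\subseteq U$ for every $\mathrm{bm}^{\ast}$-neighbourhood $V$ of $0$. Since any such $V$ must contain, for each $r$, a relative $L^{1}$-ball $\{x\in rB_{L^{\infty}}:\|x\|_{1}<\delta(r)\}$ (the $L^{1}$-topology generating $\mathrm{m}^{\ast}$ on bounded subsets), the candidate violators would be $v_{1}=\mathbf{1}_{F}\in V_{1}$ with $|F|$ close to $\delta(1)$ and $v_{2}=r\mathbf{1}_{E}\in V_{r}$ with $r|E|$ close to $\delta(r)$, their supports disjoint. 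For $r$ large, $\|v_{1}+v_{2}\|_{\infty}=r$ and $\|v_{1}+v_{2}\|_{1}\geq\delta(1)$, so $r\cdot\|v_{1}+v_{2}\|_{1}\to\infty$, and if $U$ has been designed to exclude such combinations then $V+V\not\subseteq U$ for \emph{every} $\mathrm{bm}^{\ast}$-neighbourhood $V$, giving the desired failure of joint continuity.

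The hard part, and the reason the statement remains a conjecture, is engineering $U$ to be \emph{genuinely} $\mathrm{bm}^{\ast}$-open. The immediate guess $U=\{x:\|x\|_{\infty}\,\|x\|_{1}<1\}$ rules out the critical combinations but is not $\mathrm{bm}^{\ast}$-open, because on each $rB_{L^{\infty}}$ the map $x\mapsto\|x\|_{\infty}$ is merely $\mathrm{m}^{\ast}$-lower semicontinuous and not continuous. An admissible $U$ must therefore be built from purely $\mathrm{m}^{\ast}$-continuous-on-balls ingredients (such as the $L^{p}$-norms for $1\leq p<\infty$) yet combined in a manner that thwarts subadditive control---a delicate requirement, since any topology generated by subadditive continuous pieces automatically admits $V+V\subseteq U$ by halving. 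Producing such a $U$, plausibly through the fine geometry of uniformly integrable families in $L^{1}[0,1]$---for instance via Rosenthal's $\ell^{1}$-theorem or a Rademacher-style lacunary construction---appears to be the crux; extending the conclusion from $L^{1}$ to a general non-reflexive $P$ would then proceed by a subspace or quotient reduction transporting this pathology into $P^{\ast}$.
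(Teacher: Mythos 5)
This statement is a \emph{conjecture}: the paper offers no proof of it, only the heuristic observation that the known routes from $\mathrm{bm}^{\ast}$-continuity to $\mathrm{m}^{\ast}$-continuity (Delbaen--Orihuela--Owari) all require convexity, so there is no obvious way for the two topologies to coincide on a nonreflexive space. Your proposal likewise does not prove the statement, and to your credit you say so explicitly. The parts you do argue are sound and consistent with the paper's own remarks: the reduction of ``$\mathrm{bm}^{\ast}\neq\mathrm{m}^{\ast}$'' to ``$\mathrm{bm}^{\ast}$ is not a vector topology'' via Proposition~\ref{m*=cbm*} and the footnoted equivalence $\mathrm{cb}\mathcal{T}=\mathrm{b}\mathcal{T}\Leftrightarrow\mathrm{b}\mathcal{T}$ linear is exactly the equivalence asserted in the conjecture's statement; the observation that sequences cannot witness the difference (Banach--Steinhaus forces $\mathrm{w}^{\ast}$-convergent sequences to be bounded) reproduces the paper's parenthetical remark on sequential equivalence of $\mathcal{T}$, $\mathrm{cb}\mathcal{T}$ and $\mathrm{b}\mathcal{T}$; and the description of a neighbourhood base of $0$ for $\mathrm{bm}(L^{\infty},L^{1})$ via relative $L^{1}$-balls on each $rB_{L^{\infty}}$ is correct, since $\mathrm{m}(L^{\infty},L^{1})$ agrees with convergence in measure on bounded sets.

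The genuine gap is the one you name yourself: everything hinges on producing a set $U$ that is simultaneously (a) $\mathrm{bm}^{\ast}$-open, i.e.\ has $\mathrm{m}^{\ast}$-open trace on every ball, and (b) incapable of absorbing $V+V$ for any $\mathrm{bm}^{\ast}$-neighbourhood $V$ of $0$. Your candidate $\left\{ x:\left\| x\right\| _{\infty }\left\| x\right\| _{1}<1\right\} $ fails (a), as you note, because $\left\| \cdot \right\| _{\infty }$ is only lower semicontinuous on balls for $\mathrm{m}^{\ast}$; and your own ``halving'' remark explains why no $U$ built from subadditive $\mathrm{m}^{\ast}$-continuous seminorm-like pieces can satisfy (b). So the proposal, as it stands, establishes nothing beyond what is already known; it is a (reasonable) research programme, not a proof, and its unresolved crux is precisely the reason the paper presents the statement as Conjecture~\ref{BddMack*UneqMack*} rather than as a theorem. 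The concluding suggestion that the general nonreflexive case would follow from the $L^{1}[0,1]$ case ``by a subspace or quotient reduction'' is also unsupported: bounded topologies do not in general pass well to subspaces or quotients of the predual, and the paper itself commits only to ``possibly for every nonreflexive Banach space.''
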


This conjecture is based on what it takes to establish that a $\mathrm{bm}%
^{\ast }$-continuous $\mathbb{R}$-valued function $F$, on a nonreflexive
space $P^{\ast }$, is $\mathrm{m}^{\ast }$-continuous: the
Delbaen-Orihuela-Owari results of \cite[Theorem 8]{DelbaenOrihuela20} and %
\cite[Proposition 1.2 and Theorem 4.5]{DelbaenOwari19} require $F$ to be
concave (or convex), and it is hard to imagine (even when $P^{\ast
}\allowbreak =L^{\infty }\left[ 0,1\right] $) how the convexity assumption
might be disposed of entirely---as would be necessary for $\mathrm{bm}^{\ast
}$ to equal $\mathrm{m}^{\ast }$.\footnote{%
Note also that the sufficient condition of \cite[I.4.2]{CooperSSAFA} for $%
\mathrm{b}\mathcal{T}$ to equal $\mathrm{cb}\mathcal{T}$ does not apply to $%
\mathcal{T}\allowbreak =\mathrm{m}^{\ast }$ (since it means that $\mathcal{T}
$ is the weak* topology on the dual of a Fr\'{e}chet space \cite[I.4.1 and
I.2.A]{CooperSSAFA}).}\medskip

\textbf{Comments} (completeness of $\mathrm{m}\left( P^{\ast },P\right) $
and lattice properties of $\mathrm{m}\left( L^{\infty },L^{1}\right) $):

\begin{itemize}
\item As is observed in, e.g., \cite[pp. 97--98]{GuiraoMontesinos15FAA} and %
\cite[1.1]{SchluchtermannWheeler88}, $\mathrm{m}\left( P^{\ast },P\right) $
is complete. This is a different application of Grothen\-dieck's
Theorem---one that swaps the spaces' roles and works in the ``other
direction'' to \textsl{prove\/} completeness (of $\mathrm{m}^{\ast }$, on $%
P^{\ast }$), rather than \textsl{using\/} completeness (of the norm on $P$)
as here (to prove that $\mathrm{cbm}^{\ast }\allowbreak =\mathrm{m}^{\ast }$%
). And although the argument of \cite{GuiraoMontesinos15FAA} and \cite%
{SchluchtermannWheeler88} does use the norm-completeness of $P$, this is
needed only in its first step, which uses Krein's Theorem \cite[19E]%
{HolmesGFA} rather than Grothen\-dieck's.

\item In \cite[2.1]{SchluchtermannWheeler88} it is also shown that $\mathrm{m%
}\left( P^{\ast },P\right) $ is (completely) metrizable on bounded sets if
and only if $P$ is strongly weakly compactly generated (SWCG).

\item For $P\allowbreak =L^{1}$, it follows from the Dunford-Pettis
Compactness Criterion that $\mathrm{m}\left( L^{\infty },L^{1}\right) $ is a
Lebesgue topology, i.e., it is (i)~locally solid (that is, it makes $%
L^{\infty }$ a topological vector lattice), and (ii)~order-continuous: see,
respectively, \cite[p. 535]{Bewley72} or \cite[Theorem 9.36]%
{Aliprantis-BorderIDA06} or \cite[Chapter 6: Exercise 4]%
{Aliprantis-BurkinshawLSRS}, and \cite[9.1, equivalence of (i) and (ii)]%
{Aliprantis-BurkinshawLSRS} or \cite[3.12, equivalence of (1) and (2)]%
{Aliprantis-BurkinshawLSRSwAE}. In \cite[Theorems 4 and 5]{Nowak89}, $%
\mathrm{m}\left( L^{\infty },L^{1}\right) $ is shown to be the strongest
Lebesgue topology on $L^{\infty }$, by an argument which also shows that $%
\mathrm{m}\left( L^{\infty },L^{1}\right) \allowbreak =\mathrm{cbm}\left(
L^{\infty },L^{1}\right) $.\footnote{%
In detail: the mixed topology $\gamma \allowbreak =\gamma \left( \left\|
\cdot \right\| _{\infty },\mathcal{T}_{\sigma }\right) $---where $\mathcal{T}%
_{\sigma }$ is the topology of convergence in measure (on sets of finite
measure)---is shown in \cite[Theorems 2 and 4]{Nowak89} to yield $%
L^{1}\left( \sigma \right) $ as its dual, and to be the strongest Lebesgue
topology on $L^{\infty }$ (which is of interest in itself). It is then
deduced \cite[Theorem 5]{Nowak89} that $\mathrm{m}\left( L^{\infty
},L^{1}\right) \allowbreak =\gamma $. But $\gamma \allowbreak =\mathrm{cbm}%
\left( L^{\infty },L^{1}\right) $ because $\mathcal{T}_{\sigma }$ is equal
to $\mathrm{m}\left( L^{\infty },L^{1}\right) $ on bounded sets, as is seen
from the Dunford-Pettis Criterion \cite[pp. 222--223]{GrothendieckTVS}.} In %
\cite[Theorem 6]{Nowak89}, the Lebesgue property of $\mathrm{m}\left(
L^{\infty },L^{1}\right) $ is used to show that this topology is also
complete---but, like the equality $\mathrm{cbm}^{\ast }\allowbreak =\mathrm{m%
}^{\ast }$, this too is actually true of $\mathrm{m}^{\ast }$ for every
Banach space $P$ (whether ordered or not).

\item The $\mathrm{m}\left( L^{\infty },L^{1}\right) $-continuity of lattice
operations has various uses in economic theory, such as those in \cite%
{H-WFreeDisp}, \cite[Example 4]{H-WLocCont} and \cite[Proof of Theorem 15]%
{H-WPriceDens}, in addition to those mentioned in \cite[p. 361]%
{Aliprantis-BorderIDA06}.\medskip
\end{itemize}

\textbf{Comments} (on $\mathrm{bm}\left( Y,Y^{\ast }\right) $, $\mathrm{bm}%
\left( P^{\ast },P\right) $ and $\mathrm{bw}\left( Y,Y^{\ast }\right) $):

\begin{itemize}
\item As for $\mathrm{bm}\left( Y,Y^{\ast }\right) $, where $Y$ is any
Banach space, it is of course the norm topology of $Y$, since $\mathrm{m}%
\left( Y,Y^{\ast }\right) $ is.

\item When $P$ is reflexive, setting $Y\allowbreak =P^{\ast }$ above (with $%
Y^{\ast }\allowbreak =P^{\ast \ast }\allowbreak =P$) shows that $\mathrm{bm}%
\left( P^{\ast },P\right) \allowbreak =\mathrm{m}\left( P^{\ast },P\right) $%
, and that it then is the norm topology of $P^{\ast }$.

\item In no space can $\mathrm{bm}^{\ast }$ be both linear \textsl{and\/}
different from $\mathrm{m}^{\ast }$. This is in contrast to the case of $%
\mathrm{bw}^{\ast }$ and $\mathrm{w}^{\ast }$ (but this is not strange
because the ``bounding'' strengthens the topology, and in the cases of $%
\mathrm{w}^{\ast }$ and $\mathrm{m}^{\ast }$ it starts at the opposite
extremes, in strength, of the range of the locally convex topologies on $%
P^{\ast }$ for its pairing with $P$).

\item ``Bounding'' the weak topology (when it is not $\mathrm{w}^{\ast }$)
fails to produce a linear one. That is, the \textsl{bounded weak\/}
topology, $\mathrm{bw}\left( Y,Y^{\ast }\right) $ or $\mathrm{bw}$ for
brevity, on a Banach space $Y$ is \textsl{not\/} locally convex (except, of
course, when $Y$ is reflexive, in which case $Y^{\ast }$ is also the unique
norm-predual of $Y$, and $\mathrm{bw}\allowbreak =\mathrm{bw}^{\ast }$): see %
\cite[3.7]{GomezGil84} or \cite[4.2.8]{LlavonaACDF}. It follows---as a case
of \cite[I.1.4 and I.1.5 (iii)]{CooperSSAFA} that is noted also in \cite[2.5]%
{FerreraThesisEFDC} and \cite[p. 72]{GomezGil84}---that $\mathrm{bw}$ is not
even linear (unless $Y$ is reflexive). In other words, $\mathrm{bw}$ is 
\textsl{strictly\/} stronger than $\mathrm{cbw}$, the \textsl{convex bounded
weak\/} topology. And it is \textsl{not\/} $\mathrm{bw}\left( Y,Y^{\ast
}\right) $ but $\mathrm{cbw}\left( Y,Y^{\ast }\right) $ that equals the
restriction (to $Y$) of $\mathrm{bw}\left( Y^{\ast \ast },Y^{\ast }\right) $%
, the bounded weak* topology of the second norm-dual $Y^{\ast \ast }$ (when
it differs from $Y$): see \cite[2.6]{GomezGil84}. For $Y\allowbreak =c_{0}$,
the space of real sequences converging to zero, an example of a $\mathrm{bw}$%
-closed set that is not $\mathrm{cbw}$-closed is given in \cite[4.8]%
{Wheeler72}; it is reproduced in \cite[p. 48, II.5(2)(b)]%
{Day73-NormLinSp(3rd)} and \cite[2.1]{FerreraThesisEFDC}. In \cite[3.1]%
{GomezGil84}, this example is generalized to any separable nonreflexive $Y$
that is sequentially reflexive or, equivalently by \cite{Orno91}, does not
contain an isomorphic copy of $l^{1}$ (the space of summable sequences).
\end{itemize}

\section{A summary of comparisons of $\mathcal{T}$, $\mathrm{cb}\mathcal{T}$
and $\mathrm{b}\mathcal{T}$ for $\mathcal{T}=\mathrm{w}^{\ast }$, $\mathrm{m}%
^{\ast }$, $\mathrm{w}$}

\label{SummComparBddTops}Except for the one which is only conjectured, the
following strict inclusions and equalities hold (for the topologies as
families of open/closed sets):

\begin{itemize}
\item $\mathrm{w}^{\ast }\varsubsetneq \mathrm{cbw}^{\ast }=\mathrm{bw}%
^{\ast }$. The equality holds by the Banach-Dieudonn\'{e} Theorem; the
inclusion is strict for all infinite-dimensional Banach spaces \cite[p. 48,
II.5(2)(a)]{Day73-NormLinSp(3rd)}.

\item $\mathrm{m}^{\ast }=\mathrm{cbm}^{\ast }$ (Proposition~\ref{m*=cbm*}).
Is $\mathrm{m}^{\ast }\varsubsetneq \mathrm{bm}^{\ast }$? (Conjecture~\ref%
{BddMack*UneqMack*}, for nonreflexive spaces.)

\item $\mathrm{w}\varsubsetneq \mathrm{cbw}\varsubsetneq \mathrm{bw}$. That
the second inclusion is strict (unless the space is reflexive and so $%
\mathrm{w}\allowbreak =\mathrm{w}^{\ast }$) is shown in \cite[2.6]%
{GomezGil84}.
\end{itemize}

\section{The compact weak and convex compact weak topologies}

\label{CompConvCompWeakTopols}Replacing the bounded sets in the definition
of $\mathrm{bw}$ by weak compacts produces the \textsl{compact weak\/}
topology, $\mathrm{kw}\left( Y,Y^{\ast }\right) $ or $\mathrm{kw}$ for
brevity, on a Banach space $Y$ (paired with its norm-dual $Y^{\ast }$).
Introduced in \cite{GonzalezGutierrez92}, $\mathrm{kw}\left( Y,Y^{\ast
}\right) $ is, then, defined as the strongest topology that is equal to $%
\mathrm{w}\left( Y,Y^{\ast }\right) $ on every $\mathrm{w}\left( Y,Y^{\ast
}\right) $-compact subset \cite[2.3 (b)]{GonzalezGutierrez92}. In other
words, a subset of $Y$ is $\mathrm{kw}$-closed if and only if its
intersection with every $\mathrm{w}$-compact set is $\mathrm{w}$-closed or,
equivalently, $\mathrm{w}$-compact \cite[2.1]{GonzalezGutierrez92}. An
equivalent characterization is that $\mathrm{kw}$-closed sets are the same
as sequentially $\mathrm{w}$-closed sets \cite[2.2 (a) and (b)]%
{GonzalezGutierrez92}; this follows from the Eberlein-Smulian Theorem, for
which see, e.g., \cite[Theorem 6.34]{Aliprantis-BorderIDA06}, \cite[19.4]%
{Aliprantis-BurkinshawLSRS}, \cite[2.15]{Aliprantis-BurkinshawLSRSwAE} or %
\cite[V.6.1]{Dunford-SchwartzLOI}. (So $\mathrm{kw}$ is always weaker than
the norm topology.) Another equivalent definition of $\mathrm{kw}\left(
Y,Y^{\ast }\right) $ is as the strongest topology having the same convergent
sequences as $\mathrm{w}\left( Y,Y^{\ast }\right) $; the equivalence can be
shown by using \cite[2.2 and 2.3 (b)]{GonzalezGutierrez92}.

The ``compacting'' of the weak topology fails, however, to produce a linear
one, except when it results in either $\mathrm{bw}^{\ast }$ or the norm
topology $\mathrm{m}\left( Y,Y^{\ast }\right) $. As is noted in \cite[p. 371]%
{GonzalezGutierrez92}, $\mathrm{kw}\left( Y,Y^{\ast }\right) $ is always 
\textsl{semi\/}-linear (like every $\mathrm{b}\mathcal{T}$) by \cite[Theorem
5]{Collins55}. But if $\mathrm{kw}$ is linear then it is even locally
convex, and it is so if and \textsl{only\/} if $Y$ is either (i)~a reflexive
space (in which case $\mathrm{kw}\allowbreak =\mathrm{bw}\allowbreak =%
\mathrm{bw}^{\ast }$) or (ii)~an infinite-dimensional Schur space, i.e., a
Banach space in which weakly convergent sequences are norm-convergent (in
which case $\mathrm{kw}\allowbreak =\mathrm{m}\left( Y,Y^{\ast }\right) $,
but $\mathrm{bw}$ is not linear because $Y$ is then nonreflexive): see %
\cite[2.9 and 2.5]{GonzalezGutierrez92}.\footnote{%
An infinite-dimensional Banach space cannot be both reflexive and a Schur
space; this can be seen from \cite{Orno91} and \cite[p. 2411, consequence II]%
{Rosenthal74}.} In the other cases, $\mathrm{kw}\left( Y,Y^{\ast }\right) $
is therefore different from the \textsl{convex compact weak\/} topology.
Denoted by $\mathrm{ckw}\left( Y,Y^{\ast }\right) $ or $\mathrm{ckw}$ for
brevity, this is defined as the strongest locally convex topology that is
equal to $\mathrm{w}\left( Y,Y^{\ast }\right) $ on $\mathrm{w}$-compact sets
(which is the same as the strongest locally convex topology that is weaker
than $\mathrm{kw}$).\footnote{%
Both $\mathrm{kw}$ and $\mathrm{ckw}$ can also be defined as, respectively,
the strongest topology and the strongest locally convex topology with the
same compacts as $\mathrm{w}\left( Y,Y^{\ast }\right) $.}

Furthermore, if $Y$ is reflexive then, \textsl{a fortiori\/}, it is
sequentially reflexive (i.e., $\mathrm{m}\left( Y^{\ast },Y\right) $%
-convergent sequences, in $Y^{\ast }$, are the same as the norm-convergent
a.k.a.\ $\mathrm{m}\left( Y^{\ast },Y^{\ast \ast }\right) $-convergent ones)
or, equivalently by \cite{Orno91}, $Y$ does not contain an isomorphic copy
of $l^{1}$ (i.e., no subspace of $Y$ is linearly homeomorphic to $l^{1}$).
By contrast, if $Y$ is an infinite-dimensional Schur space then it is 
\textsl{not\/} sequentially reflexive (i.e., contains $l^{1}$): see \cite[p.
2411, consequence II]{Rosenthal74}. This dichotomy corresponds exactly to
equality or inequality of $\mathrm{kw}$ and $\mathrm{bw}$, i.e., $\mathrm{cbw%
}\allowbreak =\mathrm{ckw}$ if and \textsl{only\/} if $\mathrm{bw}%
\allowbreak =\mathrm{kw}$, which is the case if and only if $Y$ is
sequentially reflexive (i.e., does not contain $l^{1}$): see \cite[2.8 and
3.3]{GonzalezGutierrez92}.

In sum, there are four---mutually exclusive and collectively
exhaustive---cases of strict inclusions and equalities (for the topologies
as families of open/closed sets):

\begin{enumerate}
\item If $Y$ is reflexive then $\mathrm{cbw}=\mathrm{bw}=\mathrm{kw}=\mathrm{%
ckw}$ (and all four are equal to $\mathrm{bw}^{\ast }$).

\item If $Y$ is not reflexive but is sequentially reflexive (or,
equivalently, does not contain $l^{1}$ as an isomorphic copy) then $\mathrm{%
ckw}\allowbreak =\mathrm{cbw}\allowbreak \varsubsetneq \mathrm{bw}%
\allowbreak =\mathrm{kw}$.

\item If $Y$ is an infinite-dimensional Schur space (and hence contains $%
l^{1}$, i.e., is not sequentially reflexive) then $\mathrm{cbw}\allowbreak
\varsubsetneq \mathrm{bw}\allowbreak \varsubsetneq \mathrm{kw}\allowbreak =%
\mathrm{ckw}$ (since $\mathrm{kw}$ is then equal to the norm topology $%
\mathrm{m}\left( Y,Y^{\ast }\right) $).

\item If $Y$ is not a Schur space but contains $l^{1}$ (i.e., is not
sequentially reflexive) then $\mathrm{cbw}\allowbreak \varsubsetneq \mathrm{%
ckw}\allowbreak \varsubsetneq \mathrm{kw}$ and $\mathrm{cbw}\allowbreak
\varsubsetneq \mathrm{bw}\allowbreak \varsubsetneq \mathrm{kw}$. That is, $%
\mathrm{ckw}$ and $\mathrm{bw}$ are two topologies that both lie strictly
between $\mathrm{cbw}$ and $\mathrm{kw}$ but are different from each other ($%
\mathrm{ckw}$ is locally convex, $\mathrm{bw}$ is not even linear)---and so
all four topologies are different. In this case $\mathrm{ckw}$ is a new
topology, i.e., it is different from all the others ($\mathrm{w}$, $\mathrm{%
cbw}$, $\mathrm{bw}$, $\mathrm{kw}$ and $\mathrm{m}\left( Y,Y^{\ast }\right) 
$).
\end{enumerate}

In this context it is worth noting that if a Banach space, $P$, contains any
infinite-dimensional Schur space, then it contains also the specific Schur
space $l^{1}$ and, furthermore, so does $P^{\ast }$; it obviously follows
that if $P$ contains $l^{1}$ then so does $P^{\ast }$ \cite[Corollaries 9
and 10]{Mujica03}. In addition, as is shown in \cite[Theorem 3]%
{PetheThakare78} and noted also in \cite[p. 371]{Mujica03}, if $P$ contains $%
l^{1}$ then $P^{\ast }$ is not a Schur space---and so, for every $%
n\allowbreak \geq 1$, the $n$\nolinebreak -$\nolinebreak $th norm-dual of $P$
contains $l^{1}$ but is not a Schur space.

\end{document}